\newtheorem{theorem}{Theorem}[section]
\newtheorem{definition}[theorem]{Definition}
\newtheorem{corollary}[theorem]{Corollary}
\newtheorem{proposition}[theorem]{Proposition}
\newtheorem{remark}[theorem]{Remark}
\newtheorem{example}[theorem]{Example}
\def\gen{\mbox{gen}}
\def\trin{\vert\vert\vert}
\title[The reconstruction formula and duality]{The reconstruction formula for Banach frames and duality}
\author{Daniel Carando, Silvia Lassalle and Pablo Schmidberg}
\thanks{This project was supported in part by UBACyT X038, UBACyT X218 and CONICET}
\address{Departamento de Matem\'{a}tica - Pab I,
Facultad de Cs. Exactas y Naturales, Universidad de Buenos Aires,
(1428) Buenos Aires, Argentina}
\email{dcarando@dm.uba.ar, slassall@dm.uba.ar, pschmid@dm.uba.ar}
\keywords{Banach frames, atomic decompositions, duality,
reflexivity} \subjclass[2000] {Primary: 41A65, 42C15, Secondary:
46B10, 46B15}
\begin{document}
\parindent 10pt
\baselineskip=.70cm

\begin{abstract}
We study conditions on a Banach frame that ensures the validity of a reconstruction formula. In particular, we show that any Banach frames for (a subspace of) $L_p$ or $L_{p,q}$ ($1\le p < \infty$) with respect to a solid sequence space always satisfies an unconditional reconstruction formula. The existence of reconstruction formulae allows us to prove some James-type results for atomic decompositions:  an unconditional atomic decomposition (or unconditional Schauder frame) for $X$ is shrinking (respectively, boundedly complete) if and only if $X$ does not contain an isomorphic copy of $\ell_1$ (respectively, $c_0$).
\end{abstract}

\maketitle
\section*{Introduction}

Banach frames emerged in the theory of frames related to Gabor
and Wavelet analysis and were formally introduced in 1991 by Gr\"ochenig
\cite{g} as an extension of the notion of frames for Hilbert spaces
to the Banach space setting. Before the concept of Banach frames was formalized,
it appeared in the foundational work of Feichtinger and Gr\"ochenig \cite{FG1, FG2} related to atomic decompositions. Loosely speaking, atomic decompositions allow a representation of every element of the space via a series expansion in terms of a fixed sequence of elements, {\it the atoms}. Banach frames, on the other hand, ensure reconstruction via a bounded {\it synthesis operator} and, many times, to find an explicit formula for this operator presents additional difficulties. One of our main results (Theorem \ref{teo: BF lattice no c0 es DA incond}) shows that the synthesis operator associated to a wide class of Banach frames, is given by a series expansion with unconditional convergence, whose coefficients depend linearly and continuously on the entry.

Banach frames and atomic decompositions appeared in the field of applied mathematics providing applications to signal processing, image processing and sampling theory among other areas. In the wavelet context, Frazier and  Jawerth presented decompositions for Besov spaces in their early work \cite{FrJa1}, and later for distribution spaces in \cite{FrJa2}, where a new approach to the traditional atomic decomposition of Hardy spaces can be found. Feichtinger characterized Gabor atomic decomposition for modulation spaces \cite{Fei} and, at the same time, the general theory was developed in his joint work with Gr\"ochenig \cite{FG1, FG2}.  Here, the authors show that reconstruction through atomic decompositions are not limited to Hilbert spaces. Indeed, they construct frames for a large class of Banach spaces, namely the {\it coorbit spaces}. Thereafter, a vast literature was dedicated to the subject (see the references in~\cite{CaHaLa}).

We focus our discussion within the framework of abstract approximation theory in Banach spaces. This allows us to relate the concepts of Banach frames and atomic decomposition to properties of Banach spaces such as separability and reflexivity.

We show that a Banach frame for a Banach space $X$ with
respect to a solid space $Z$ (in our terminology, an unconditional Banach frame) admits a reconstruction formula
whenever $X$ does not contain a copy of $c_0$. In this case, the
Banach frame automatically defines an unconditional atomic
decomposition. This holds for reflexive Banach spaces or spaces with
finite cotype. As a consequence, any Banach frame for $L_p$ ($1\le
p<\infty$) and Lorentz function space $L_{p,q}$ ($1< p,
q<\infty$), or any of their  subspaces, with respect to a solid sequence space admits a reconstruction formula.
The reconstruction formula for Banach frames is applied to obtain some
James-type results: an unconditional
atomic decomposition or Schauder frame for $X$ is shrinking if and only if $X$ does
not contain a copy of $\ell_1$, and is boundedly complete if and
only if $X$ does not contain a copy of $c_0$. This improves some
results of~\cite{CaLa} and~\cite{Liu}.

The paper is organized as follows. In the first section,
we introduce the basic definitions that will be used throughout.
In section 2, we recall the definitions of shrinking and boundedly complete
atomic decompositions, and present some basic duality results. Section 3 is
devoted to the main results of the article.

For further information on atomic decompositions and Banach frames
see, for example, \cite{CaHaLa} and the references therein. For an historical survey on some aspects of frame theory for Hilbert spaces see \cite{Heil} and the references therein. We refer to \cite{Di, LiTzI, LiTzII} for a background in Banach spaces and Banach lattices.

\section{Background and generalities}
By a Banach sequence space we mean a Banach space of scalar
sequences, indexed by $\mathbb N$, for which the coordinate functionals are continuous.
We say that the space is a Schauder sequence space if, in
addition, the unit vectors $\{e_i\}$ given by $(e_i)_j=\delta_{i,j}$
form a basis for it. In this case, a sequence $a=(a_i)$ can be
written as $a=\sum_{i=1}^\infty a_ie_i$.

We start by recalling the definition of a Banach frame:
\begin{definition}\label{def: BF}
Let $X$ be a Banach space and $Z$ be a Banach  sequence space. Let
$(x'_i)$ be a sequence in $X'$ and let $S\colon
Z\to X$ be a continuous operator. The pair $((x'_i),S)$ is said to be a Banach frame for
$X$ with respect to $Z$ if for all $x\in X$:

{\rm (a)} $(\langle x'_i,x\rangle) \in Z$,

{\rm (b)} $A\|x\|\le \| (\langle x'_i,x\rangle)  \|_{Z}\le B \|x\|$,
with $A$ and $B$ positive constants,

{\rm (c)} $x=S(\langle x'_i,x\rangle)$.
\end{definition}

The operator $S$ is said to be the \textit{synthesis operator}.
Conditions (a) and (b) allow the definition of the
\textit{analysis operator} $J\colon X\to Z, \quad Jx\colon =(\langle
x'_i,x\rangle)_i$. The synthesis and analysis operators determine
the Banach frame in the following sense: if $((x'_i),S)$ is a Banach
frame then,  $SJ=id_X$ and $x'_i=J'e'_i$. On the other hand, if
$S\colon Z\to X$ and $J\colon X\to Z$ are continuous operators such
that $SJ=id_X$ then, $((J'e'_i),S)$ is a Banach frame for $X$ with
respect to $Z$ (see~\cite[Remark 1.2]{CaLa}
and~\cite[Page 712]{CaChSt}).

Whenever $Z$ is a Schauder sequence space, the continuity of $S$
gives the reconstruction formula for the Banach frame:
\begin{equation}\label{eq:reconstruction formula}
x=\sum_{i=1}^\infty \langle x'_i,x\rangle Se_i
\end{equation}
for all $x\in X$. If the canonical sequence $(e_i)$ does not span $Z$,
the reconstruction formula does not necessarily hold, even for separable Banach spaces $X$.
Let us see a simple example of this:
\begin{example}\label{ej: BF no DA c0} \rm
Let $X=c_0$ (the space of null sequences) and $Z=c$ (the space of convergent sequences).
We consider the following operators:
$$
S\colon Z\to X, \quad Sa\colon =(\ell,a_1-\ell,a_2-\ell,\ldots),\
\mbox{where}\ \ell=\lim_i a_i,
$$
and
$$
J\colon X\to Z, \quad Jx\colon =(x_1,x_2+x_1,x_3+x_1,\ldots).
$$
Note that $S$ and $J$ are bounded with $\|S\|=\|J\|=2$. If we set $x'_1\colon =e'_1$ and $x'_i\colon =e'_1+e'_i$ for
$i\geq 2$, then  $Jx=(\langle
x'_i,x\rangle)$ and $SJ=id_X$, so $((x'_i),S)$ is a Banach frame for $c_0$ with respect to $c$.
Let us see that the reconstruction formula does not hold for $x=e_1$. Since $Se_i=e_{i+1}$, we have for each $n$
$$
\begin{aligned}
\sum_{i=1}^n \langle x'_i,e_1\rangle Se_i & = \langle
x'_1,e_1\rangle e_2+\sum_{i=2}^n \langle
x'_i,e_1\rangle e_{i+1}\\
& = e_2+e_3+\cdots +e_n.
\end{aligned}
$$
Then, $\sum_{i=1}^\infty \langle x'_i,e_1\rangle Se_i$ does not converge.
\end{example}
\bigskip

One of the purposes of this work is to establish conditions that ensures that
the reconstruction formula is satisfied by a Banach frame.
A similar but subtly different structure is that of atomic decomposition.
The reconstruction formula is imposed as part of the definition, and in return
we give up the existence of a linear operator $S$ defined on the whole space $Z$:

\begin{definition}\label{def: DA}
Let $X$ be a Banach space and $Z$ be a Banach  sequence space. Let
$(x'_i)$ and $(x_i)$ be sequences in $X'$ and $X$ respectively. We
say that $((x_i'),(x_i))$ is an atomic decomposition of $X$ with
respect to $Z$ if for all $x\in X$:

{\rm (a)} $(\langle x'_i,x\rangle) \in Z$,

{\rm (b)} $A\|x\|\le \| (\langle x'_i,x\rangle)  \|_{Z}\le B \|x\|$,
with $A$ and $B$ positive constants,

{\rm (c)} $x=\sum_{i=1}^\infty \langle x'_i,x\rangle x_i$.
\end{definition}

The comments above say that a Banach frame with respect to a Schauder
sequence space automatically defines an atomic decomposition. Moreover,
any Banach frame satisfying a reconstruction formula defines an atomic decomposition.

Let us describe a sort of converse of this statement. A separable
Banach space admits an atomic decomposition if an only if it has the
bounded approximation property  (see \cite{JoRoZi, Pe} and also
\cite[Theorem 2.10]{CaHaLa}). Moreover, if $((x_i'),(x_i))$ is an
atomic decomposition of $X$ with respect to some Banach sequence
space $Z$, it is always possible to find a Schauder sequence space
$X_d$ and an operator $S\colon X_d \to X$ such that $Se_i=x_i$ and
$((x_i'),(x_i))$ is also an atomic decomposition of $X$ with respect
to $X_d$. In this case, $((x_i'),S)$ turns out to be a Banach frame
for $X$ with respect to $X_d$. Therefore, we might say that an
atomic decomposition defines a Banach frame, as long as we are
allowed to change the sequence space involved. Note that the natural
inclusion from $c_0$ into $\ell_\infty$ defines an atomic
decomposition for $c_0$ with respect to $\ell_\infty$, but there is
no Banach frame for $c_0$ with respect to $\ell_\infty$. Therefore,
it is sometimes really necessary to change the sequence space.

On the other hand, the Banach frame of Example~\ref{ej: BF no DA c0}
does not define an atomic decomposition, since the reconstruction
formula does not hold.

\medskip
Let $((x_i'),(x_i))$ be an atomic decomposition for
$X$ with respect to a Banach sequence space $Z$. There is a natural
procedure that allows us to replace  $Z$ by a Schauder sequence
space $X_d$ so that $((x_i'),(x_i))$ is also an atomic decomposition
of $X$ with respect to $X_d$ (see \cite[Theorem 2.6]{CaHaLa}). For the sake of completeness, we sketch the construction of $X_d$ under the assumption that $x_i$ is nonzero, for all $i$, since this assumption avoids some technicalities. Consider $c_{00}$ the
linear space of scalar finite support sequences with unit vectors
$(e_i)$ endowed with the norm:
$$
\|\sum_i a_ie_i\|=\sup_n\|\sum_{i=1}^n a_ix_i\|_X.
$$
Now, define $X_d$ as the completion of $c_{00}$ with the norm given
above. In fact,
\begin{equation}\label{canonical seq sp}X_d = \big\{(a_i)\big/\ \sum_{i=1}^\infty a_i x_i\
\mbox{converges}\big\}
\end{equation} and $((x_i'),(x_i))$ turns out to be an atomic decomposition of $X$ with respect to $X_d$.
We will call this Schauder sequence space {\it
the canonical associated Schauder space} to the corresponding atomic
decomposition. We also remark that Theorem 2.6 of  \cite{CaHaLa} (or the existence of $X_d$) implies  that a Banach space admits an atomic decomposition if and only if it is
complemented in a Banach space with an basis.

One of the advantages of working with Banach frames or atomic decomposition is that
these structures have a nicer behavior than that of basis with respect to subspaces.
First, note that
if $((x'_i),S)$ is a Banach frame for  $X$ with respect to $Z$ then $X$ is isomorphic
to a complemented subspace of $Z$. This property relies on the simple fact that $SJ=id_X$
and therefore $JS$ is the desired projection. We also have:

\begin{remark}\label{rem: subsp and proj}
Let $X$ be a Banach space and $Z$ be a Banach sequence space. Suppose $(x'_i)\subset X'$
satisfies properties {\rm (a)} and  {\rm (b)} of Definition~\ref{def: DA} and let $P\colon X\to X$
be a projection then,
\begin{enumerate}
\item[(i)]
if $((x'_i),S)$ is a Banach frame for $X$ with respect to $Z$ then, $((P'x'_i),PS)$ is a Banach
frame for the space $PX$ with respect to $Z$.
\item[(ii)]
if $((x'_i),(x_i))$ is an atomic decomposition for $X$ with respect to $Z$ then, $((P'x'_i),(Px_i))$
is an atomic decomposition for the space $PX$ with respect to $Z$.
\end{enumerate}
\end{remark}

\begin{proof}
Let $((x'_i),S)$ be a Banach frame. Since $\langle
P'x'_i,x\rangle=\langle x'_i,Px\rangle=\langle x'_i,x\rangle$ for all $x\in PX$ and for all $i$,
the sequence $(\langle P'x'_i,x\rangle)$ belongs to $Z$, for all $x\in PX$.
\medskip

Also, if $x\in PX$, we have that $\|(\langle
P'x'_i,x\rangle)\|_Z=\|(\langle x'_i,x\rangle)\|_Z$ and
 $S(\langle P'x'_i,x\rangle)=S(\langle x'_i,x\rangle)=x$. Thus, statement (i) is proved.
Now, take  $((x'_i),(x_i))$ an atomic decomposition. To prove (ii) it only remains to show
that the reconstruction formula holds. Indeed, we have
$$
Px=P(Px)=P\big(\sum_{i=1}^\infty \langle x'_i,Px\rangle x_i\big)= \sum_{i=1}^\infty
\langle P'x'_i,Px\rangle Px_i.
$$\end{proof}
\medskip

In most applications, sequence spaces associated to a Banach frame are solid.
Recall that a Banach sequence space $Z$ is called solid if for any pair of sequences
$a=(a_i)$ and $b=(b_i)$ with $a\in Z$ and such that  $|b_i| \le |a_i|$ for all $i$,
we have that $b\in Z$ and $\|b\|\le\|a\|$. Classical examples of solid sequence spaces are $c_0$, $\ell_p$ and Lorentz and Orlicz sequence spaces. Solid sequence spaces are Banach lattices modeled over the natural numbers, with the coordinatewise  order, and are also called K\"{o}the sequence spaces, or normal Banach sequence spaces.
For the theory of Banach lattices we refer to \cite{AlBu} and \cite{LiTzII}. Any Banach space with a 1-unconditional basis is a solid space and those with arbitrary unconditional basis can be renormed to be solid. On the other hand, if $Z$ is a solid sequence space, then the canonical unit vectors form a 1-unconditional basic sequence.  This motivates the
following:

\begin{definition}
A Banach frame with respect to a solid sequence space is said to be an unconditional Banach frame.
\end{definition}

Note that in our definition of unconditional Banach frame we do not require the solid sequence space to have a basis.
Unconditional Banach frames have a natural counterpart in the atomic decomposition framework: unconditional atomic decompositions. This last concept introduced and studied
in~\cite{CaLa} is equivalent to that of ``framing for Banach
spaces'' given in~\cite{CaHaLa} and of ``unconditional frame'' given in~\cite{CaDiOdSchZs,Liu} (see the comments at the end of this section).

\begin{definition}\label{def:desc at incond}
An atomic decomposition $((x'_i), (x_i))$ for $X$ with respect to a Banach sequence space $Z$ is said to be unconditional if for any $x\in X$,
its series expansion $\sum_{i=1}^\infty \langle x_i', x\rangle x_i$ converges unconditionally, that is
$$
x=\sum_i \langle x'_i,x\rangle x_i,
$$
with unconditional convergence.
\end{definition}

It is known that if a series $\sum_i x_i$ converges
unconditionally, then, for every bounded sequence of scalars
$\{a_i\}$, the series $\sum_i a_i x_i$ converges and the operator
$\ell_\infty\to X$ defined by $(a_i)\mapsto \sum_i a_i x_i$ is a
bounded linear operator (see~\cite{LiTzI}, page 16). Thus, a
repeated use of the uniform boundedness principle (or a single
application of the bilinear Banach-Steinhaus theorem~\cite[Ex 1.11(a)]{DeFl}) shows:

\begin{remark}\label{obs: DA incond y bilineal}
Let $((x'_i),(x_i))$ be an unconditional atomic decomposition for
$X$. Then the mapping
$$
B\colon X\times \ell_\infty\to X,\quad B(x,a)\colon= \sum_i a_i
\langle x'_i,x\rangle x_i
$$
is a well defined bounded bilinear operator.
\end{remark}

The norm of the bilinear operator $B$ defined above can be seen as an unconditional constant for the atomic decomposition $((x'_i),(x_i))$. Equivalent constants have been introduced in \cite{CaHaLa} and \cite{CaDiOdSchZs}.

If $((x'_i),(x_i))$ is an unconditional atomic
decomposition for $X$, it is always possible to find a solid sequence space with Schauder basis $X_d$ and an operator $S\colon X_d \to X$
such that $Se_i=x_i$ and $((x_i'),(x_i))$ is also an unconditional
atomic decomposition of $X$ with respect to $X_d$. The construction
is similar to (\ref{canonical seq sp}) and can be found in~\cite[Theorem~3.6]{CaHaLa}. Assuming again that $x_i\neq 0$ for all $i$,
the solid sequence space with Schauder basis $X_d$ is
\begin{equation}\label{canonical solid seq sp}
X_d\colon = \big\{(a_i)\big/\ \sum_i a_i x_i\ \mbox{converges
unconditionally}\big\},
\end{equation}
endowed with the norm $\|(a_i)\|{_{X_d}}\colon
=\sup_{b\in B_{\ell_{\infty}}}\|\sum_i b_i a_i x_i\|$. We will refer to this space as {\it the canonical  solid Schauder space} associated to the corresponding atomic decomposition. Therefore, an unconditional
atomic decomposition defines a Banach frame with respect to some solid Schauder sequence space. Conversely, a Banach frame with respect to a solid Schauder sequence space defines an unconditional atomic decomposition. Moreover, \cite[Theorem 3.6]{CaHaLa} says that a Banach space admits an unconditional atomic decomposition if and only if it is complemented in a Banach space with an unconditional basis.
\medskip

Most of the properties of atomic decompositions we will study are independent of the associated sequence space. Also, the construction of the canonical Schauder spaces (\ref{canonical seq sp}) and (\ref{canonical solid seq sp}) associated to an atomic decomposition, \textit{only} involve
the reconstruction formulae and not the original sequence space. Therefore, unless specific properties of the associated space are required, we will talk about atomic decompositions without reference to any sequence space (having in mind, if necessary, the canonical sequence spaces associated to the decomposition). When the Banach sequence space is disregarded, the concept of (unconditional) atomic decomposition is equivalent  to that of (unconditional) Schauder frame in the sense of~\cite{CaDiOdSchZs}.

\section{Some remarks on duality for atomic decompositions}

In order to relate atomic decomposition to duality properties of Banach spaces, the notion of shrinking atomic decomposition was introduced in~\cite{CaLa}.
Before we recall the definition, consider the linear
operators $T_N\colon X \to X$ by $T_N(x)=\sum_{i\ge N} \langle x'_i,
x\rangle x_i$. These operators are uniformly bounded by the uniform
boundedness principle. With this notation we have:
\begin{definition}\label{def: DA shrinking}
Let $((x_i'),(x_i))$ be an atomic decomposition of
$X$. We say that
$((x_i'),(x_i))$ is shrinking if for all $x'\in X'$
\begin{equation}\label{condicion shrinking}
\|x'\circ T_N\| \longrightarrow 0.
\end{equation}
\end{definition}
As a matter of fact, the definition in \cite{CaLa} was referred to as an atomic decomposition with respect to a concrete Banach sequence space $Z$. However, it must be noted that the condition (\ref{condicion shrinking}) is independent of the associated sequence space.
In general, $T_N\circ T_M$ may be different from $T_{\min(N,M)}$,
which means that $T_N$ in not a projection on the closure of
$[x_i\colon i\ge N]$. This shows one of the main differences between
atomic decompositions and bases. Indeed, the definition above
is not equivalent to $\|x'|_{[x_i\colon i\ge N]}\|$ going to 0 for
every $x'\in X'$ (see \cite{CaLa} for details). For some particular
atomic decompositions, which in fact are simultaneously Banach
frames, it is shown in \cite[Theorem 1.4]{CaLa} that shrinking
atomic decompositions behave as shrinking Schauder bases in the
following sense: suppose $X_d$ is a Schauder sequence space with
basis $(e_i)$ and there exists a synthesis operator $S\colon X_d \to
X$ such that $Se_i=x_i$ and $((x_i'),(Se_i))$ is an atomic
decomposition of $X$ with respect to $X_d$. Then the the dual pair
$((Se_i), (x'_i))$ is an atomic decomposition for $X'$ with respect
to $(X_d)'$ if and only if $((x'_i), (Se_i))$ is shrinking. Let us
see that we can extend this result to arbitrary atomic
decompositions:

\begin{proposition}\label{prop: DA shr y serie conv}
Let $X$ be a Banach space and  $((x'_i),(x_i))$ be an atomic decomposition for $X$.
The following are equivalent:
\begin{enumerate}
\item[(i)] the pair $((x'_i),(x_i))$ is shrinking,
\item[(ii)] the pair $((x_i),(x'_i))$ is an atomic decomposition for $X'$,
\item[(iii)] for all $x'\in X'$,  $\sum_{i=1}^\infty \langle x',x_i\rangle x'_i$ is convergent.
\end{enumerate}
\end{proposition}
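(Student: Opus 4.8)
The plan is to route everything through the adjoints of the partial-sum operators together with the already-noted uniform boundedness of the $T_N$. First I would introduce, for each $n$, the finite-rank operator $S_n\colon X\to X$, $S_n x=\sum_{i=1}^n\langle x'_i,x\rangle x_i$, and record the identity $S_n=id_X-T_{n+1}$, so that $\sup_n\|S_n\|<\infty$. Taking Banach-space adjoints gives, for every $x'\in X'$ and every $x\in X$, the relation $\langle S_n'x',x\rangle=\langle x',S_nx\rangle$, whence
\[
S_n'x'=\sum_{i=1}^n\langle x',x_i\rangle\,x'_i\qquad\text{and}\qquad x'\circ T_{n+1}=T_{n+1}'x'=x'-S_n'x'.
\]
A preliminary observation I would isolate: since $S_nx\to x$ for all $x$ by the reconstruction formula, for any $x'$ one has $\langle S_n'x',x\rangle\to\langle x',x\rangle$; hence if the series $\sum_i\langle x',x_i\rangle x'_i$ converges in $X'$ \emph{at all}, its sum is forced to be $x'$, since the weak-$*$ limit identifies it. This makes (iii) equivalent to the statement that $S_n'x'\to x'$ in norm for every $x'$.

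Given this, the equivalence (i) $\Leftrightarrow$ (iii) should be immediate from the displayed identity $x'\circ T_{n+1}=x'-S_n'x'$: being shrinking means $\|x'\circ T_N\|\to0$ for all $x'$, which is exactly $\|x'-S_n'x'\|\to0$, i.e.\ norm convergence of the dual series to $x'$, i.e.\ (iii).

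For (ii) $\Leftrightarrow$ (iii) I would argue as follows. The implication (ii) $\Rightarrow$ (iii) is trivial, since condition (c) in the definition of an atomic decomposition of $X'$ is precisely the convergence of $\sum_i\langle x',x_i\rangle x'_i$. For (iii) $\Rightarrow$ (ii) I would verify the three defining conditions against the canonical associated Schauder space of the pair $((x_i),(x'_i))$, namely $Z'=\{(a_i)\mid\sum_i a_ix'_i\text{ converges}\}$ endowed with $\|(a_i)\|_{Z'}=\sup_n\|\sum_{i=1}^n a_ix'_i\|$, viewing each $x_i$ as a functional on $X'$ through the canonical embedding. Condition (a), that $(\langle x',x_i\rangle)\in Z'$, and condition (c), the reconstruction formula, are exactly (iii) together with the limit identification above. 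For the norm equivalence (b) the upper bound is $\|(\langle x',x_i\rangle)\|_{Z'}=\sup_n\|S_n'x'\|\le(\sup_n\|S_n\|)\,\|x'\|$, and the lower bound is $\|x'\|=\lim_n\|S_n'x'\|\le\sup_n\|S_n'x'\|=\|(\langle x',x_i\rangle)\|_{Z'}$, so one may take lower constant $1$.

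The genuinely delicate points are not in the chain of implications but in the two identifications that make the chain run: that condition (iii) as stated, \emph{mere} convergence of the dual series, already pins the sum down to $x'$, which I handle by the weak-$*$ argument above; and that the lower frame bound in (b) is available precisely because $S_n'x'\to x'$ in norm rather than merely weak-$*$. A minor technicality, handled exactly as in the construction sketched earlier in the paper, arises if some $x'_i$ vanish, in which case the canonical space must be defined with a little care so that its coordinate functionals remain continuous; this does not affect the argument.
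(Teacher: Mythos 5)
Your proposal is correct, but it is genuinely more self-contained than the paper's argument. The paper proves (i) $\Rightarrow$ (ii) by passing to the canonical Schauder space of (\ref{canonical seq sp}), observing that the shrinking condition is independent of the sequence space, and then citing \cite[Theorem 1.4]{CaLa}; it gets (ii) $\Rightarrow$ (iii) trivially, and (iii) $\Rightarrow$ (i) from the identity $\|x'\circ T_N\|=\big\|\sum_{i=N}^\infty \langle x',x_i\rangle x'_i\big\|$ --- which is exactly the adjoint identity $x'\circ T_{n+1}=x'-S_n'x'$ driving your (i) $\Leftrightarrow$ (iii). Where you diverge is in proving (iii) $\Rightarrow$ (ii) by hand: you construct the canonical sequence space for the dual pair $((x_i),(x'_i))$ and verify conditions (a), (b), (c) directly, with explicit frame bounds $A=1$ and $B=\sup_n\|S_n\|$. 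This effectively inlines the portion of \cite[Theorem 1.4]{CaLa} that the paper imports as a black box, and it also makes explicit the weak-$*$ identification (mere convergence of the dual series forces its sum to be $x'$) which the paper leaves implicit in its displayed identity. Your route buys self-containedness, a cleaner implication structure ((i) $\Leftrightarrow$ (iii) and (ii) $\Leftrightarrow$ (iii)) and an explicit sequence space for the dual decomposition; the paper's route buys brevity and the extra information that the dual decomposition can be modeled on $(X_d)'$, the K\"othe-type dual of the canonical space. One small quibble with your closing commentary, not with the proof itself: the lower bound in (b) does not actually require norm convergence --- weak-$*$ convergence of $S_n'x'$ to $x'$ already yields $\|x'\|\le\sup_n\|S_n'x'\|$; what genuinely needs (iii) are conditions (a) and (c), since your space $Z'$ consists of sequences whose associated series converge in norm. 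Your treatment of the degenerate case of vanishing $x'_i$ parallels the paper's own caveat for vanishing $x_i$ and is acceptable as stated.
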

\begin{proof} For (i) $\Rightarrow$ (ii), consider $X_d$ the canonical Schauder space associated
to $((x'_i),(x_i))$ presented in~(\ref{canonical seq sp}). As we have mentioned,  $((x'_i),(x_i))$ is
shrinking as an atomic decomposition with respect to $X_d$, since the definition of
shrinking atomic decomposition is independent of the sequence
space. Then the result follows from \cite[Theorem~1.4]{CaLa}. The
implication (ii) $\Rightarrow$ (iii) is immediate and (iii)
$\Rightarrow$ (i) follows directly from the equality $\|x' \circ
T_N\|=\big\|\sum_{i=N}^\infty \langle x',x_i\rangle x'_i\big\|$.
\end{proof}

As a consequence of this result, a Banach space admitting a
shrinking atomic decomposition has necessarily a separable dual. In
particular, $\ell_1$ does not admit such a decomposition. Also,
Proposition~\ref{prop: DA shr y serie conv} shows the equivalence
between the notion of shrinking atomic decomposition and the concept
of pre shrinking atomic decomposition given in~\cite{Liu}.
\medskip

Another concept related to duality is that of
boundedly complete atomic decomposition, which was introduced in \cite{CaLa}
and is a natural extension of the definition of boundedly complete Schauder basis (in~\cite{Liu}, this last concept is defined as
``pre boundedly complete'').

\begin{definition}\label{def: DA acot completa}
Let $X$ be a Banach space and let $((x_i'),(x_i))$ be an atomic decomposition of
$X$. The atomic decomposition
is said to be boundedly complete if for each $x''\in X''$, the
series $\sum_{i=1}^\infty \langle x'', x'_i\rangle x_i$ converges in
$X$.
\end{definition}

We have already mentioned that ``admitting an atomic decomposition''
is a property that is inherited by complemented subspaces
(Remark~\ref{rem: subsp and proj}). The same happens if we require
the atomic decomposition to be shrinking or boundedly complete, this fact will be used later:

\begin{remark}\label{obs: DA acot completa proy}
Let $((x'_i),(x_i))$ be an atomic decomposition of $X$ and let $P\colon X\to X$ be a continuous linear projection.
If $((x'_i),(x_i))$ is boundedly complete (shrinking), then $((P'x'_i),(Px_i))$ is also a boundedly complete (shrinking) atomic decomposition for $PX$.
\end{remark}

\begin{proof}
Put $Y=PX$ and let us show the statement for complete boundedness. Given $y''\in Y''$, consider $\sum_{i=1}^\infty \langle P''y'',x'_i\rangle x_i$ which converges since  $((x'_i),(x_i))$ is boundedly complete. Then,
$\sum_{i=1}^\infty \langle y'',P'x'_i\rangle Px_i=P\big(\sum_{i=1}^\infty\langle
P''y'',x'_i\rangle x_i\big)$ is also convergent. The arguments for shrinking atomic decompositions are similar.
\end{proof}

If an atomic decomposition is modeled on a Schauder sequence space
$X_d$ with a boundedly complete basis, then the atomic decomposition
is boundedly complete. It is easy to find an example to show that
the converse of this result is false. Indeed, take $X$ a reflexive
Banach space with basis $(f_i)$. Consider $X_d=X\oplus\ c_0$ with
the basis which alternates the elements
$f_i$ with the elements of the canonical basis of $c_0$. The natural
inclusion $J:X\hookrightarrow X_d$ and projection $S:X_d\to X$
define a boundedly complete atomic decomposition, but clearly the
basis $(e_i)$ of $X_d$ is not boundedly complete.

The following remark shows that not every separable Banach space admits a boundedly complete atomic decomposition (take, for instance, $X=c_0$).

\begin{remark}\label{obs: DA acot compl y bidual}
Let be $X$ a Banach space with a boundedly complete atomic decomposition. Then, $X$ is complemented in its bidual $X''$.
\end{remark}

\begin{proof}
Let $((x'_i),(x_i))$ be a boundedly complete atomic
decomposition for $X$. By the Banach-Steinhaus theorem, the
following mapping is well defined and bounded:
$$
P\colon X''\to X,\quad Px''\colon =\sum_{i=1}^\infty \langle x'',x'_i\rangle
x_i.
$$
Now, the reconstruction formula says that $P$ is the desired projection.
\end{proof}

A kind of converse of the previous result holds for unconditional
atomic decompositions (see Corollary~\ref{coro: DA acot compl y
bidual 2}).

\section{The reconstruction formula and the James-type results}

The following result provides us with a sufficient condition to ensure reconstruction formulae for unconditional Banach frames. The proof is based on that of a theorem of Fiegel, Johnson and Tzafriri~\cite[Proposition 1.c.6]{LiTzII} for Banach lattices.

\begin{theorem}\label{teo: BF lattice no c0 es DA incond}
Let $((x'_i),S)$ be an unconditional Banach frame for  $X$. If $X$ does not contain an isomorphic copy of $c_0$, then we have reconstruction formula for $((x'_i),S)$. More precisely, if $(e_i)$ denotes the sequence of the canonical unit vectors of the solid sequence space $Z$ associated to the frame, then for all $x\in X$ we have $$
x=\sum_i \langle x'_i,x\rangle Se_i
$$
unconditionally. Equivalently,  $((x'_i),(Se_i))$ is an unconditional atomic decomposition for $X$.
\end{theorem}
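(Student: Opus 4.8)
The plan is to fix $x\in X$, write $a=Jx=(\langle x_i',x\rangle)_i\in Z$ (so that $\|a\|_Z\le B\|x\|$ and $S(a)=x$, since $SJ=id_X$), and prove that $\sum_i\langle x_i',x\rangle Se_i=\sum_i a_iSe_i$ converges unconditionally to $x$. Once the reconstruction formula is established in this unconditional form, conditions (a) and (b) of an atomic decomposition for the pair $((x_i'),(Se_i))$ are inherited verbatim from the frame (same functionals, same sequence $(\langle x_i',x\rangle)$, same space $Z$), so the final assertion is immediate.

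First I would check that $\sum_i a_iSe_i$ is weakly unconditionally Cauchy. Since $Z$ is solid, for any signs $\epsilon_i=\pm1$ the finitely supported sequence $\sum_{i\le n}\epsilon_i a_ie_i$ is dominated coordinatewise by $a$, so $\|\sum_{i\le n}\epsilon_i a_ie_i\|_Z\le\|a\|_Z$; applying the continuous operator $S$ yields $\|\sum_{i\le n}\epsilon_i a_iSe_i\|\le\|S\|\,\|a\|_Z$ uniformly in $n$ and in the signs. Thus the series is weakly unconditionally Cauchy, and since $X$ contains no copy of $c_0$, the Bessaga--Pe\l czy\'nski theorem (\cite{LiTzI}) ensures that it converges unconditionally, say to $y\in X$.

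It remains to identify $y$ with $x$, which is the heart of the matter and where the argument of \cite[Proposition 1.c.6]{LiTzII} is used. It suffices to show $\langle x',y\rangle=\langle x',x\rangle$ for every $x'\in X'$. On one hand $\langle x',y\rangle=\sum_i a_i\langle x',Se_i\rangle=\sum_i(S'x')_i a_i$, and on the other $\langle x',x\rangle=\langle x',S(a)\rangle=\langle S'x',a\rangle$, so everything reduces to showing that the functional $S'x'\in Z'$ acts on $a$ through its coordinates, i.e. $\langle S'x',a\rangle=\sum_i(S'x')_i a_i$. I would use the band decomposition $S'x'=\phi+\sigma$ of the dual Banach lattice $Z'$ into its order-continuous part $\phi$ and its singular part $\sigma$ (the latter vanishing on $\overline{[e_i]}$). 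The order-continuous part satisfies $\phi(a)=\sum_i\phi_i a_i=\sum_i(S'x')_i a_i$, so the desired identity amounts to $\sigma(a)=0$; in fact I would prove the stronger statement that $\sigma=0$.

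The obstacle, and the step that genuinely needs the hypothesis, is ruling out a nonzero singular part. Assume $\sigma\neq0$. Then there is $0\le b\in Z$ with $\sigma(b)>0$, and because $\sigma$ is singular one can extract pairwise disjoint elements $0\le w_m\le b$, with supports tending to infinity, such that $\sigma(w_m)\ge\delta>0$. Solidity makes $(w_m)$ weakly unconditionally Cauchy in $Z$ (signed finite sums are dominated by $b$), hence $(Sw_m)$ is weakly unconditionally Cauchy in $X$; moreover $\langle x',Sw_m\rangle=\phi(w_m)+\sigma(w_m)$, and since $\phi$ is order continuous while the supports escape to infinity, $\phi(w_m)\to0$, so $\langle x',Sw_m\rangle\ge\delta/2$ eventually and $\inf_m\|Sw_m\|>0$. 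A weakly unconditionally Cauchy series whose terms do not tend to zero admits a subsequence equivalent to the unit vector basis of $c_0$, producing a copy of $c_0$ inside $X$ and contradicting the hypothesis. Hence $\sigma=0$, which gives $\langle x',y\rangle=\langle x',x\rangle$ for all $x'\in X'$ and therefore $y=x$. This disjointification against a singular functional is precisely the mechanism of \cite[Proposition 1.c.6]{LiTzII}, and I expect it to be the main technical difficulty; the rest reduces to routine manipulations with solidity and the identity $SJ=id_X$.
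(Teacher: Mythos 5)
Your first two steps are correct, and they are genuinely more direct than anything in the paper: solidity gives $\|\sum_{i\in F}\epsilon_i a_ie_i\|_Z\le\|a\|_Z$, hence $\sum_i a_iSe_i$ is weakly unconditionally Cauchy, and Bessaga--Pe\l czy\'nski with $c_0\not\subset X$ makes it converge unconditionally to some $y$. The proof collapses, however, at the step you yourself call the heart of the matter: the extraction of pairwise disjoint $0\le w_m\le b$ with $\sigma(w_m)\ge\delta>0$ is not just unjustified, it is impossible for \emph{any} positive functional, singular or not. Disjointness and $0\le w_m\le b$ give $\sum_{m=1}^M w_m\le b$ coordinatewise for every $M$, hence $\sum_{m=1}^M\sigma(w_m)\le\sigma(b)$, so $\sigma(w_m)\to 0$ summably (and $|\sigma(u_m)|\le|\sigma|(u_m)\to0$ for your modified elements, by the same telescoping applied to $|\sigma|$). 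Notice that this is exactly the computation you used to show $\phi(w_m)\to0$ for the order-continuous part: it never used order continuity. Singularity is simply invisible to order-bounded disjoint sequences --- a Banach limit on $\ell_\infty$ is purely singular yet annihilates every such sequence in the limit. You have also misread the mechanism of \cite[Proposition 1.c.6]{LiTzII}: there the disjoint-sequence test is applied to a \emph{norm} (a supremum over a whole ball of functionals), for which ``order continuous iff order-bounded disjoint sequences are norm null'' is a valid criterion; for a single functional only the implication you do not need survives.

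The gap is not cosmetic, because the statement you aim at is false: neither $\sigma=0$ nor even the weaker identity $\sigma(Jx)=0$ follows from $c_0\not\subset X$. Take $X=\mathbb{R}$, $Z=\ell_\infty$, fix a free ultrafilter $\mathcal{U}$, and set $Sz=\lim_{\mathcal U}z+\sum_i 2^{-i}z_i$ and $Jt=(t/2)\mathbf{1}$; then $SJ=id_X$, so $((J'e'_i),S)$ is an unconditional Banach frame with $Se_i=2^{-i}\ne0$, yet your limit is $y=\sum_i (t/2)2^{-i}=t/2\ne t$: the singular component $\lim_{\mathcal U}$ of $S'$ does not vanish on $J(X)$, and the WUC series converges unconditionally to the wrong element. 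So the identification $y=x$ is where all the content sits, and no disjointification of individual functionals can deliver it. The paper's own proof takes a completely different route: it renorms $Z$ by $\trin a\trin=\sup_{|b|\le|a|}\|Sb\|_X$, completes to $\widetilde Z$, invokes the construction of \cite[Proposition 1.c.6]{LiTzII} (via $J(X)$ complemented in $Z$ and containing no $c_0$) to get order continuity of $\widetilde Z$, concludes that the unit vectors form an unconditional basis of $\widetilde Z$, and transfers the frame there, where the reconstruction formula is automatic for continuity reasons. You should be aware that the example above puts pressure precisely on the paper's assertion that the completion $\widetilde Z$ is again a \emph{sequence} space whose unit vectors form a basis (in the example, the completion acquires an extra atom with all coordinates zero, coming from the singular part), so the difficulty you ran into is real and is localized at the same spot --- but your particular mechanism for resolving it does not work.
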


\begin{proof} Taking a subspace of $Z$ if necessary, we may assume that $Se_i\ne 0$ for all $i$. Now, we follow the ideas of the proof of \cite[Proposition 1.c.6]{LiTzII}.
We consider in $Z$ the following semi-norm
$$
\trin a\trin \colon =\sup_{|b|\leq |a|} \|Sb\|_X.
$$

Note that if $\trin a\trin=0$, since $|a_i e_i|\leq |a|$ we have
that $\|S(a_i e_i)\|=0$ and  then $a_i=0$ for all $i$. Thus, $\trin
\cdot \trin$ is indeed a norm. Let $\widetilde{Z}$ be the completion
of $Z$ with this norm and let $\iota\colon (Z, \|\cdot \|_Z) \to
(\widetilde Z, \trin\cdot\trin)$ be the natural inclusion. Note that
if $|b|\leq |a|$  then $\|Sb\|_X\leq
\|S\|\|b\|\leq \|S\|\|a\|$ and $\iota  \colon Z\to \widetilde{Z}$ is
bounded with $\|\iota\|\le \|S\|$. It is easy to see that $\tilde Z$
is a solid sequence space, its canonical unit
vectors being $\tilde e_i:=\iota(e_i)$.

The subspace $J(X)$ is complemented in $Z$ and isomorphic to $X$, then $J(X)$ does not contain a subspace isomorphic to $c_0$. Since our construction of $\widetilde Z$ coincides with that of \cite[Proposition 1.c.6]{LiTzII}, we are in conditions to ensure that $\widetilde Z$ is order continuous. In this case, the unit vectors form a basis for $\widetilde{Z}$. Indeed, since $\widetilde Z$ is solid, it is enough to show that every $a\in \widetilde Z$ with $a\ge 0$ belongs to $\overline{\gen}\{\tilde e_i\}$. But for such $a$, the sequence $a-\sum_{i=1}^Na_i \tilde e_i$ decreases to 0 in order and, by order continuity, in norm. We have then seen that $\widetilde Z$ is a Schauder sequence space with an unconditional basis.

Now, we may consider $\theta$ the restriction of $\iota$ to $J(X)$ and put $\widetilde Y=\theta J(X)$ obtaining a subspace of $\widetilde Z$ isomorphic to $X$. We have the following commutative diagram:

$$
\xymatrix{
&   & Z \ar[dl]_S   \ar[r]^\iota
                 & \widetilde Z \ar[d]^{\widetilde S}     \\
& X  \ar[r]^J    & JX \ar[r]^\theta  \ar@{^{(}->}[u]  & \widetilde Y }
$$
where $\widetilde{S}$ is defined on $\iota(Z)$ by  $\widetilde S \iota = \theta JS$ and is then extended by continuity and density to $\widetilde Z$ (the continuity of $\widetilde{S}$ on $\iota(Z)$ follows from the definition of $\trin\cdot\trin$).

We claim that $((x'_i),S\theta^{-1}\widetilde S)$ is a Banach frame for $X$ with respect to $\widetilde Z$. If that is the case, since $S\theta^{-1}\widetilde S(e_i)= S(e_i)$ and $\widetilde Z$ is a Schauder sequence space, we would have the desired result. As $(\langle x'_i,x\rangle) \in Z$ and every sequence in $Z$ belongs to $\widetilde Z$, condition (a) of the definition of Banach frame holds. Also we have
$$
\|x\|=\|S(\langle x'_i,x\rangle)\|\le \trin (\langle x'_i,x\rangle)\trin =\trin \iota (Jx)\trin \le \|\iota\| \|J\|\|x\| \le \|S\|\|J\|\|x\|
$$
and the second condition is also satisfied. Finally, $S\theta^{-1}\widetilde S(\langle x'_i,x\rangle) = S\theta^{-1}\widetilde S( \iota (Jx))=x$ gives the third condition. \end{proof}

Note that Theorem~\ref{teo: BF lattice no c0 es DA incond} applies, for example,
to reflexive Banach spaces, or Banach spaces with finite cotype. In particular, any unconditional Banach frame for a subspace of $L_p$  or of a Lorentz function space $L_{p,q}$ ($1\le p<\infty$) has automatically a reconstruction formula (see~\cite{Cr} for cotype of Lorentz function spaces $L_{p,q}$). Analogous results can be obtained for many Lorentz or Orlicz functions spaces, the cotype of which are widely studied.
\bigskip

Theorem 1.c.7 in \cite{LiTzII} asserts that given $Y$ a
complemented subspace of a Banach lattice, $Y$ is reflexive if and
only if no subspace of $Y$ is isomorphic to $c_0$ or to $\ell_1$.
From this result and the comments previous to Remark~\ref{rem: subsp and proj} we have, in
particular, that if $X$ has an unconditional Banach frame, then $X$ is reflexive if and only if $X$ does not contain a
copy of $c_0$ or $\ell_1$. On the other hand, if a Banach space
admits an atomic decomposition which is both shrinking and boundedly
complete, then  it is reflexive \cite[Proposition 2.4]{CaLa}. The
converse holds under the additional assumption that the reflexive
space admits an unconditional atomic decomposition \cite[Theorem
2.5]{CaLa}. We combine and rephrase these results
as:

\begin{remark}\label{coro: shr-acot compl equiv c0 y ell1}
Let $X$ be a Banach space which admits an unconditional atomic decomposition $((x'_i),(x_i))$.
Then, the following are equivalent:
\begin{enumerate}
\item[(i)] $((x'_i),(x_i))$ is shrinking and boundedly complete,
\item[(ii)] $X$ does not contain a copy of $c_0$ or $\ell_1$,
\item[(iii)] $X$ is reflexive.
\end{enumerate}
\end{remark}

Our goal now is to show that, just as in the Schauder basis context,
we can split the equivalence (i)$\Leftrightarrow$(ii) in the
previous remark into two independent results. Note that in the previous remark and also the following results, the words ``atomic decomposition'' can be readily replaced by ``Schauder frame''. As a consequence, we improve some results in~\cite{Liu}. First we have:

\begin{theorem}\label{teo: shr y ell1}
Let $X$ be a Banach space which admits an unconditional atomic decomposition $((x'_i),(x_i))$.
Then, $((x'_i),(x_i))$ is shrinking if and only if  $X$ does not contain a copy of $\ell_1$.
\end{theorem}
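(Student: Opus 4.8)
The plan is to prove the two implications separately, with the forward direction essentially a consequence of the duality already developed and the reverse being a James-type block construction.

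For the easy direction, suppose $((x'_i),(x_i))$ is shrinking. By Proposition~\ref{prop: DA shr y serie conv} the pair $((x_i),(x'_i))$ is then an atomic decomposition for $X'$, so every $x'\in X'$ is a norm-convergent series in the fixed countable family $(x'_i)$; hence $X'=\overline{\mathrm{gen}}\{x'_i\}$ is separable. Since a Banach space with separable dual cannot contain an isomorphic copy of $\ell_1$ (the adjoint of the inclusion would map $X'$ onto the nonseparable space $\ell_\infty\cong\ell_1'$), it follows that $X$ does not contain $\ell_1$.

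For the converse I would argue by contradiction, assuming that $X$ does not contain $\ell_1$ while $((x'_i),(x_i))$ fails to be shrinking. By Remark~\ref{obs: DA incond y bilineal} the multiplier maps $P_A x:=\sum_{i\in A}\langle x'_i,x\rangle x_i=B(x,\mathbf 1_A)$ are uniformly bounded by $\|B\|$; in particular the tails $T_N=P_{[N,\infty)}$ are. Failure of shrinking yields $x'\in X'$, $\varepsilon>0$, an increasing sequence $N_k$ and unit vectors $y_k$ with $|\langle x',T_{N_k}y_k\rangle|>\varepsilon$. Since each series defining $T_{N_k}y_k$ converges, truncating to a large finite window produces pairwise disjoint intervals $I_k$ and blocks $z_k:=P_{I_k}y_k=B(y_k,\mathbf 1_{I_k})$ with $\|z_k\|\le\|B\|$ and $|\langle x',z_k\rangle|>\varepsilon$. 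Lifting to the canonical solid Schauder space $X_d$ of~(\ref{canonical solid seq sp}), with its $1$-unconditional basis $(e_i)$ and operators $S,J$ satisfying $SJ=\mathrm{id}_X$, I set $c_k:=\sum_{i\in I_k}\langle x'_i,y_k\rangle e_i$, so that $Sc_k=z_k$, the $c_k$ are disjointly supported (hence $1$-unconditional basic) with $\|c_k\|_{X_d}\le\|B\|$, and $\langle S'x',c_k\rangle=\langle x',z_k\rangle$ is bounded below in modulus by $\varepsilon$. The classical James estimate then applies in $X_d$: aligning block signs with $S'x'$ and using unconditionality of $(c_k)$ gives $\tfrac{\varepsilon}{\|S'x'\|}\sum_k|b_k|\le\|\sum_k b_k c_k\|_{X_d}\le\|B\|\sum_k|b_k|$, so $(c_k)$ spans a copy of $\ell_1$ inside $X_d$.

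The main obstacle is the final step: transferring this copy of $\ell_1$ from $X_d$ down into $X$. Unlike for bases, an atomic decomposition need not be biorthogonal, and $X_d$ genuinely may contain $\ell_1$ even when $X$ does not, so the copy $[c_k]$ need not sit in $J(X)\cong X$; moreover $S$ is only bounded and so does not preserve the lower $\ell_1$-estimate, while testing $\|\sum_k b_k z_k\|$ against the single functional $x'$ loses the bound to cancellation. To close this gap I would invoke Rosenthal's $\ell_1$ theorem: under the standing hypothesis that $X$ omits $\ell_1$, the witnessing sequence $(y_k)$ (and the $z_k$) may be taken weakly Cauchy, which lets me perform a gliding-hump selection in $X_d$ so that the relevant blocks can be chosen approximately inside $J(X)$. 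The isomorphic embedding $J$ then carries the resulting $\ell_1$-sequence back to $X$, contradicting the hypothesis and forcing the decomposition to be shrinking. I expect this interleaving of the block selection with the Rosenthal/gliding-hump refinement — reconciling the fact that $S'x'$ is bounded below only on tails while the escaping mass needed for near-disjointness comes from weakly null differences — to be the genuinely delicate part of the argument.
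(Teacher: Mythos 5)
Your forward implication is correct and is exactly the paper's argument (Proposition~\ref{prop: DA shr y serie conv} gives an atomic decomposition of $X'$, hence $X'$ is separable, hence $X$ omits $\ell_1$). The converse, however, has a genuine gap, and you have located it yourself: the James-type block argument produces an $\ell_1$-sequence $(c_k)$ only in the canonical solid space $X_d$, and since $X_d$ may well contain $\ell_1$ even when $X$ does not (only the complemented copy $J(X)$ is controlled), this yields no contradiction. The proposed repair via Rosenthal plus a gliding hump cannot be carried out as sketched, because the two things it needs pull in opposite directions. To run a Bessaga--Pe\l czy\'nski-type selection placing blocks approximately inside $J(X)$ you must work with a coordinatewise (indeed weakly) null sequence, e.g.\ differences $y_{k+1}-y_k$ of a weakly Cauchy subsequence; but weak nullity forces $\langle x',\cdot\rangle\to 0$ along that sequence, destroying the uniform lower bound $|\langle x',z_k\rangle|>\varepsilon$, which is the \emph{only} source of the lower $\ell_1$-estimate. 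Conversely, retaining $|\langle x',z_k\rangle|\ge\varepsilon$ means $(z_k)$ is not weakly null, and then there is no reason the truncations $c_k=P_{I_k}Jy_k$ lie near $J(X)$: truncation does not preserve membership in $J(X)$, precisely because an atomic decomposition is not biorthogonal. Your closing sentence acknowledges this tension but offers no mechanism to resolve it, and this is exactly the obstruction that makes the naive transfer of James's proof for unconditional bases fail for atomic decompositions.

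The paper sidesteps the problem entirely by dualizing instead of working downstairs. From $SJ=\mathrm{id}_X$ on the canonical solid space $X_d$ one gets $J'S'=\mathrm{id}_{X'}$, so $((Se_i),J')$ is a Banach frame for $X'$ with respect to $(X_d)'=X_d^\times$, the K\"othe dual, which is again solid (though possibly without a basis --- which is why Theorem~\ref{teo: BF lattice no c0 es DA incond} was stated without assuming the solid space has one). Since $X$ omits $\ell_1$, the dual $X'$ omits $c_0$ by \cite[Proposition~2.e.8]{LiTzI}, so Theorem~\ref{teo: BF lattice no c0 es DA incond} applies to this \emph{dual} frame and yields the reconstruction formula $x'=\sum_i\langle x',x_i\rangle x'_i$ for all $x'\in X'$ (using $S''e''_i=Se_i=x_i$ and $J'e'_i=x'_i$); then Proposition~\ref{prop: DA shr y serie conv}, (iii)$\Rightarrow$(i), gives that the decomposition is shrinking. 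In short, the missing idea is to apply the reconstruction theorem upstairs in $X'$, where the relevant obstruction is $c_0$ rather than $\ell_1$, instead of trying to push $\ell_1$-blocks from $X_d$ down into $X$.
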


\begin{proof}
Suppose $((x'_i),(x_i))$ is shrinking, by Proposition~\ref{prop: DA
shr y serie conv} $((x_i),(x'_i))$ is an atomic decomposition for
$X'$ with respect to some Banach sequence space, in particular, $X'$
is separable. Then, $X$ contains no subspace isomorphic to $\ell_1$.

Conversely, suppose $X$ does not admit a copy of $\ell_1$. Let $X_d$
be the canonical associated solid space respect to $((x'_i),(x_i))$
with synthesis operator $S$. Note that $(X_d)'=X_d^\times$, the dual
of K\"othe, then it is a sequence space and we may consider the
coordinate functions $(e_i'')$. Let $J\colon X\to X_d$ be the
analysis operator. Since $J'S'=id_{X'}$, $((S'' e''_i),J')$ is a
Banach frame for $X'$  with respect to $(X_d)'$. Now, $S'' e''_i=S
e_i$. Indeed, for all $x'\in X'$ we have that
$$
\langle S'' e''_i,x'\rangle=\langle e''_i,S'x'\rangle=\langle
S'x',e_i\rangle=\langle x',Se_i\rangle.
$$
Thus, $((S e_i),J')$ is a Banach frame for $X'$ respect to some solid space.
Since $X$ contains no copy of  $\ell_1$, $X'$ contains no copy of $c_0$ (\cite[Proposition~2.e.8]{LiTzI}). Therefore, by Theorem~\ref{teo: BF lattice no c0 es DA incond},
$((Se_i),(J' e'_i))$ is an unconditional atomic decomposition for $X'$. Moreover, we have $x_i=Se_i$ and $x'_i=J' e'_i$, then we obtain the reconstruction formula
$x'=\sum_i \langle x',x_i\rangle x'_i$ for all $x'\in X'$. Finally, by Proposition~\ref{prop: DA shr y serie conv}, $((x'_i),(x_i))$ is shrinking.
\end{proof}

Regarding the containment of $c_0$, we obtain:

\begin{theorem}\label{teo: acot completa y c0}
Let $X$ be a Banach space which admits an unconditional atomic decomposition $((x'_i),(x_i))$.
Then, $((x'_i),(x_i))$ is boundedly complete if and only if  $X$ does not contain a copy of $c_0$.
\end{theorem}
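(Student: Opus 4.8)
The plan is to reduce everything to the canonical solid Schauder space $X_d$ associated to the decomposition and to isolate a single key fact: for \emph{every} $x''\in X''$ the series $\sum_i\langle x'',x'_i\rangle x_i$ is weakly unconditionally Cauchy in $X$. Once this is available, both halves of the equivalence follow from the Bessaga--Pe\l czy\'nski characterization of spaces not containing $c_0$. Throughout I write $(e_i)$ for the $1$-unconditional basis of $X_d$, $S\colon X_d\to X$ for the synthesis operator ($Se_i=x_i$) and $J\colon X\to X_d$, $Jx=(\langle x'_i,x\rangle)$, for the analysis operator, so that $SJ=\mathrm{id}_X$ and $x'_i=J'e'_i$; note also that $X$ is separable, being the closed span of the $x_i$.

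To prove the key fact, given $x''\in X''$ I would set $z''=J''x''\in X_d''$ and observe, using $x'_i=J'e'_i$, that the $i$-th coordinate of $z''$ is $\langle z'',e'_i\rangle=\langle x'',x'_i\rangle$. Because $X_d$ is solid the finite coordinate projections $P_F$ on $X_d$ have norm at most $1$, hence so do the $P_F''$, and $P_F''z''=\sum_{i\in F}\langle x'',x'_i\rangle e_i$ lies in $X_d$ with norm at most $\|J\|\,\|x''\|$. Applying $S$ and using solidity once more to absorb arbitrary scalars $|\theta_i|\le 1$ gives $\big\|\sum_{i\in F}\theta_i\langle x'',x'_i\rangle x_i\big\|\le \|S\|\,\|J\|\,\|x''\|$ for every finite $F$; this uniform bound is exactly the statement that $\sum_i\langle x'',x'_i\rangle x_i$ is weakly unconditionally Cauchy.

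For the implication ``$X$ contains no copy of $c_0\Rightarrow$ boundedly complete'' I would invoke the theorem of Bessaga--Pe\l czy\'nski \cite{Di}: in a space not containing $c_0$ every weakly unconditionally Cauchy series converges unconditionally. Combined with the key fact, $\sum_i\langle x'',x'_i\rangle x_i$ then converges for each $x''\in X''$, which is precisely bounded completeness. For the converse, assuming bounded completeness and $c_0\hookrightarrow X$, Sobczyk's theorem \cite{Di} makes the copy of $c_0$ complemented in the separable space $X$; by Remark~\ref{obs: DA acot completa proy} this complemented subspace $Y\cong c_0$ carries a boundedly complete atomic decomposition, so by Remark~\ref{obs: DA acot compl y bidual} it is complemented in $Y''\cong\ell_\infty$. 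This contradicts Phillips' theorem that $c_0$ is uncomplemented in $\ell_\infty$, and the contradiction shows $X$ cannot contain $c_0$.

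The main obstacle I anticipate is that bounded completeness of the decomposition is genuinely weaker than bounded completeness of the basis of $X_d$ --- the paper's own example (a reflexive space $\oplus\,c_0$) shows that the bad ``$c_0$-part'' of $X_d$ can be annihilated by the projection $JS$ --- so the forward implication cannot be obtained by transporting the James theorem for unconditional bases through $X_d$. This is why the two directions are proved by different mechanisms: the analytic Bessaga--Pe\l czy\'nski route for one, and the structural Sobczyk/Phillips route for the other. A secondary technical point is the key fact itself: one is tempted to argue that the partial sums $\sum_{i\le n}\langle x'',x'_i\rangle x_i$ converge weak-$*$ to $x''$, but this fails in general (already for the unit-vector basis of $\ell_1$), so the weakly-unconditionally-Cauchy estimate must be extracted from the uniform norm bound on the coordinate projections $P_F''$ on $X_d''$ rather than from any convergence of $\sum_{i\le n}\langle x'',x'_i\rangle e_i$ in $X_d$.
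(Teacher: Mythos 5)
Your argument is correct, and while your proof of the easy implication coincides with the paper's, your proof of the hard implication takes a genuinely different route. The direction ``boundedly complete $\Rightarrow$ no copy of $c_0$'' is exactly the paper's: Sobczyk's theorem plus Remark~\ref{obs: DA acot completa proy} and Remark~\ref{obs: DA acot compl y bidual} (the paper phrases the contradiction via the latter remark rather than naming Phillips, but it is the same point). For the converse, the paper argues by contraposition: from a divergent series $\sum_i\langle x'',x'_i\rangle x_i$ it extracts blocks $y_j$ with $\|y_j\|\ge\delta$, obtains the bound $\big\|\sum_{j=1}^N a_jy_j\big\|\le \|x''\|\max_j|a_j|$ by combining Goldstine's lemma with the bilinear boundedness of Remark~\ref{obs: DA incond y bilineal}, verifies that $(y_j)$ is weakly null, and invokes the Bessaga--Pe\l czy\'nski selection principle \cite[Theorem 14.2]{AlBu} to produce a copy of $c_0$ inside $X$. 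You instead isolate the positive statement that $\sum_i\langle x'',x'_i\rangle x_i$ is weakly unconditionally Cauchy for \emph{every} $x''$, proving it by the biadjoint computation $P_F''J''x''=\sum_{i\in F}\langle x'',x'_i\rangle e_i$ together with $\|P_F''\|=\|P_F\|\le1$ in the canonical solid space (\ref{canonical solid seq sp}); this computation is correct, and your closing caution is apt --- the estimate must indeed come from the uniform bound on the $P_F''$ and not from any weak-star convergence of the partial sums, which already fails for the unit vector basis of $\ell_1$. The standard Bessaga--Pe\l czy\'nski theorem (in a space containing no copy of $c_0$, every weakly unconditionally Cauchy series converges unconditionally, see \cite{Di}) then finishes the proof. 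In substance the two routes transfer the same $\ell_\infty$-multiplier bound from $X$ to $X''$: your $P_F''$ step plays the role of the paper's Goldstine step, with the solidity of $X_d$ standing in for the bilinear map of Remark~\ref{obs: DA incond y bilineal}. What yours buys is modularity --- no block extraction, no weak-null verification, and the $c_0$-theorem quoted in its cleanest off-the-shelf form --- at the mild cost of routing everything through the canonical space $X_d$, where (as the paper does in (\ref{canonical solid seq sp})) you should either assume $x_i\ne0$ for all $i$ or discard the zero atoms, which affects neither bounded completeness nor the containment of $c_0$.
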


\begin{proof}
Suppose that $X$ contains a copy of $c_0$. Then, $X$ being
separable, by Sobczyc's theorem (\cite[Theorem 2.5.8]{AlKa}),
there exists a projection $P\colon X\to X$ such that $P(X)$ is
isomorphic to $c_0$. If
$((x'_i),(x_i))$ were boundedly complete, then, by Remark~\ref{obs: DA
acot completa proy}, $((P'x'_i),(Px_i))$ should be a boundedly
complete atomic decomposition for $P(X)$. This fact contradicts
Remark~\ref{obs: DA acot compl y bidual}.

Conversely, suppose that $((x'_i),(x_i))$ is not boundedly complete. Then, there exists $x''\in X''$ such that $\sum_{i=1}^\infty \langle x'',x'_i\rangle x_i$ is nonconvergent. In other words, we can find $\delta>0$ and two sequences of positive integers $(p_i), (q_i)$, so that
$p_1<q_1<p_2<q_2<p_3<q_3<\cdots$ and
$$
\big\|\sum_{i=p_j}^{q_j} \langle x'',x'_i\rangle x_i\big\|\geq
\delta,\quad\mbox{for all}\ j.
$$

Take $y_j=\sum_{i=p_j}^{q_j} \langle x'',x'_i\rangle x_i$. We will show that $c_0$ is embeddable in $X$. First, let us see that there exists $c>0$ so that for any positive integer $N$ and any choice of scalars $a_1,\ldots,a_N$, we have:
\begin{equation}\label{eq: bloques para acot completa}
\big\|\sum_{j=1}^N a_j y_j\big\|\leq c\max_{1\leq j \leq N}
|a_j|.
\end{equation}

Fix $\varepsilon>0$, by Goldstine's lemma, given $N\in\mathbb N$, we can find $x_N\in X$ such that
$\|x_N\|\leq \|x''\|$ and
$$
\big\|\sum_{j=1}^N a_j y_j\big\|=\big\|\sum_{i=1}^M b_i \langle
x'',x'_i\rangle x_i\big\| \leq \big\|\sum_{i=1}^M b_i \langle
x_N,x'_i\rangle x_i\big\| + \varepsilon,
$$
where $b_i$ is $a_j$ for some $j$ or $0$. Now, by Remark~\ref{obs: DA incond y
bilineal} we have that
$$
\big\|\sum_{i=1}^M b_i \langle x_N,x'_i\rangle x_i\big\|\leq
\|b\|_\infty \|x_N\|\leq \|a\|_\infty \|x''\|.
$$
Thus, we obtain~\eqref{eq: bloques para acot completa} for $c=
\|x''\|$. Since $\|y_j\|> \delta$, by the Bessaga-Pelczynski
theorem, it only remains to show that $y_j\stackrel{w}{\rightarrow}
0$. If this were not the case, passing to a subsequence if
necessary, we may assume that there exists $x'_0\in X'$ so that
$|\langle x'_0,y_j\rangle|\geq 1$ for all $j$. Now, take
$b_j=sign(\langle x'_0,y_j\rangle)$,
$$
N\leq \sum_{i=1}^N |\langle x'_0,y_j\rangle|=\big|\sum_{i=1}^N b_j
\langle x'_0,y_j\rangle\big|\leq \|x'_0\|\big\|\sum_{i=1}^N b_j
y_j\big\|\leq c\|x'_0\|\quad\mbox{for all}\ N,
$$
which is a contradiction. Then, $y_j\stackrel{w}{\rightarrow} 0$ and
we have that $X$ admits a copy of $c_0$ by a direct application of
\cite[Theorem 14.2]{AlBu}.
\end{proof}

As a consequence, we have the converse of
Remark~\ref{obs: DA acot compl y bidual} for spaces with unconditional atomic decompositions.
Indeed, if $X$ is complemented in its bidual, it cannot contain $c_0$
(since, by Sobczyc theorem this copy would be complemented, and this would provide a projection
from $\ell_\infty$ to $c_0$). So we have:

\begin{corollary}\label{coro: DA acot compl y bidual 2}
Let $X$ be a Banach space with a unconditional atomic decomposition.
Then, $X$ is complemented in its bidual if and only if the atomic decomposition
is boundedly complete (if and only if $X$ does not contain a copy of $c_0$).
\end{corollary}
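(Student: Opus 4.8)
The plan is to assemble the corollary from results already in hand, since two of the three implications are essentially done. Indeed, the equivalence between ``boundedly complete'' and ``$X$ does not contain a copy of $c_0$'' is precisely Theorem~\ref{teo: acot completa y c0}, while Remark~\ref{obs: DA acot compl y bidual} gives that a boundedly complete atomic decomposition forces $X$ to be complemented in $X''$. Thus the only implication that remains, and the one that closes the cycle, is that if $X$ is complemented in its bidual then $X$ cannot contain a copy of $c_0$; combined with Theorem~\ref{teo: acot completa y c0} this yields ``complemented in $X''$ $\Rightarrow$ boundedly complete'' and completes both equivalences.

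To prove the remaining implication I would argue by contraposition. Suppose $X$ contains a subspace $Y$ isomorphic to $c_0$. Since $X$ admits an atomic decomposition, the reconstruction formula shows $X=\overline{\gen}\{x_i\}$, so $X$ is separable; hence Sobczyk's theorem (\cite[Theorem 2.5.8]{AlKa}) provides a bounded projection $Q\colon X\to Y$. Assume, toward a contradiction, that $X$ is also complemented in its bidual, say via a projection $P\colon X''\to X$. The idea is to feed these two projections into the bidual of $Y$: since $Y\cong c_0$ we have $Y''\cong \ell_\infty$, and the canonical embedding realizes $Y''$ as a subspace of $X''$.

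Consider then the composition $R\colon Y''\hookrightarrow X''\xrightarrow{P}X\xrightarrow{Q}Y$. For $y\in Y$, viewed inside $Y''\subset X''$, one has $Py=y$ (as $y\in X$ and $P$ fixes $X$) and then $Qy=y$ (as $y\in Y$), so $R$ is the identity on $Y$. Hence $R$ exhibits $Y$ as a complemented subspace of $Y''$, i.e.\ it yields a bounded projection of $\ell_\infty$ onto $c_0$, contradicting the classical fact that $c_0$ is uncomplemented in $\ell_\infty$. Therefore $X$ cannot contain $c_0$, as desired. The only point requiring care is the bookkeeping of the canonical embeddings $Y\subset Y''\subset X''$ and $X\subset X''$, which must be checked to be compatible so that $R|_Y=\mathrm{id}_Y$; I expect no serious obstacle beyond this, as the substantive content of the statement genuinely lives in Theorem~\ref{teo: acot completa y c0}, and the classical non-complementation of $c_0$ in $\ell_\infty$ then supplies the contradiction with no further work.
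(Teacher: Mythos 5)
Your proposal is correct and follows essentially the same route as the paper: the text preceding the corollary assembles it from Remark~\ref{obs: DA acot compl y bidual} and Theorem~\ref{teo: acot completa y c0}, and proves the remaining implication exactly as you do, noting that a copy of $c_0$ in the separable space $X$ would be complemented by Sobczyk's theorem, which together with a projection $X''\to X$ would yield a projection of $\ell_\infty$ onto $c_0$, a contradiction. Your more careful bookkeeping of the embeddings $Y\subset Y''\subset X''$ is just an expanded version of the paper's one-line argument, and it checks out.
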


For Banach frames, we have an analogous result, which follows from
Corollary~\ref{coro: DA acot compl y bidual 2} and Theorem \ref{teo:
BF lattice no c0 es DA incond}:

\begin{corollary}\label{coro: BF solid y bidual}
Suppose $X$ admits an unconditional Banach frame.
Then, $X$ is complemented in its bidual if and only if $X$ does not contain a copy of $c_0$.
\end{corollary}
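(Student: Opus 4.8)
The plan is to read the statement as the equivalence (no copy of $c_0$) $\Leftrightarrow$ ($X$ complemented in $X''$) and to obtain it by splicing together the two results it is announced to follow from: Theorem~\ref{teo: BF lattice no c0 es DA incond}, which upgrades the unconditional Banach frame to an unconditional atomic decomposition precisely when $X$ omits $c_0$, and Corollary~\ref{coro: DA acot compl y bidual 2}, which settles the corresponding equivalence once an unconditional atomic decomposition is in hand. I would prove the two implications separately, since only one of them actually calls on the frame.

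First I would treat the implication (no $c_0$) $\Rightarrow$ (complemented in $X''$). Assuming $X$ contains no isomorphic copy of $c_0$, Theorem~\ref{teo: BF lattice no c0 es DA incond} provides the reconstruction formula for $((x'_i),S)$, so that $((x'_i),(Se_i))$ is an unconditional atomic decomposition of $X$. Feeding this decomposition into Corollary~\ref{coro: DA acot compl y bidual 2}, the hypothesis that $X$ does not contain $c_0$ forces the decomposition to be boundedly complete and $X$ to be complemented in its bidual; explicitly, the projection is the map $P\colon X''\to X$ given by $Px''=\sum_i \langle x'',x'_i\rangle Se_i$ of Remark~\ref{obs: DA acot compl y bidual}. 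This is the direction in which the frame does the work, through Theorem~\ref{teo: BF lattice no c0 es DA incond}.

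For the converse, (complemented in $X''$) $\Rightarrow$ (no $c_0$), I would argue by contradiction exactly as in the remark recorded just before Corollary~\ref{coro: DA acot compl y bidual 2}, an argument that does not involve the frame. Suppose $X$ is complemented in $X''$ but contains a copy of $c_0$. By Sobczyk's theorem \cite{AlKa} this copy is complemented in $X$, hence, composing with a projection of $X''$ onto $X$, the copy of $c_0$ is complemented in $X''$. Restricting the resulting projection to the subspace $c_0''\cong\ell_\infty$ sitting inside $X''$ would then yield a bounded projection of $\ell_\infty$ onto $c_0$, which is impossible. This contradiction gives the claim.

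The step I expect to be delicate is the appeal to Sobczyk's theorem in the converse, because it needs $X$ to be separable: for a nonseparable space a copy of $c_0$ need not be complemented (for instance $c_0$ is not complemented in $\ell_\infty$), so the contradiction would break down. Separability is therefore the real hypothesis doing the work here. In the first implication it comes for free, since the atomic decomposition produced by Theorem~\ref{teo: BF lattice no c0 es DA incond} yields a reconstruction formula and hence $X=\overline{\gen}\{Se_i\}$; in the converse one must either assume it, as is standard in this setting, or supply it, and checking that it is legitimately available is the point I would be most careful about.
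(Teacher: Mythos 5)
Your proposal follows exactly the paper's route: the paper offers no more for this corollary than the remark that it ``follows from Corollary~\ref{coro: DA acot compl y bidual 2} and Theorem~\ref{teo: BF lattice no c0 es DA incond}'', whose converse direction is precisely the Sobczyk--Phillips argument you spell out. Your forward implication is complete and correct: absence of $c_0$ activates Theorem~\ref{teo: BF lattice no c0 es DA incond}, producing the unconditional atomic decomposition $((x'_i),(Se_i))$, and then Corollary~\ref{coro: DA acot compl y bidual 2} (with the explicit projection of Remark~\ref{obs: DA acot compl y bidual}) gives the complementation in $X''$.

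The separability issue you flag in the converse, however, is a genuine gap --- and it is a gap in the paper's own one-line deduction as well, not an artifact of your write-up. Nothing in the hypotheses supplies separability: the paper's definition of an unconditional Banach frame explicitly does \emph{not} require the solid space $Z$ to have a basis, and in the converse direction you cannot invoke Theorem~\ref{teo: BF lattice no c0 es DA incond} to manufacture an atomic decomposition (and hence separability of $X$), since its hypothesis --- that $X$ contains no copy of $c_0$ --- is exactly the conclusion being sought. Corollary~\ref{coro: DA acot compl y bidual 2} escapes this because any space with an atomic decomposition is automatically separable; no such mechanism is available here. Indeed, the unqualified statement fails: take $X=Z=\ell_\infty$, which is a solid Banach sequence space in the paper's sense, with $x'_i=e'_i$ the coordinate functionals and $S=\mathrm{id}$. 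This is an unconditional Banach frame, and $\ell_\infty$, being a dual space, is complemented in its bidual, yet it contains a copy of $c_0$. So the converse requires $X$ separable as an additional hypothesis (under which your Sobczyk-plus-Phillips argument is exactly right), and your closing instinct is correct in the strongest possible sense: separability must be \emph{assumed}; it cannot be supplied from the frame.
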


\subsection*{Acknowledgements} The authors wish to express their gratitude to Professor Hans G. Feichtinger for the suggestions he made to improve this manuscript and also for providing them with additional references on Banach frames.

\end{document}